\documentclass[reqno]{amsart}
\usepackage{amsmath,amsthm,amssymb,amscd,float}
\usepackage{subcaption}
\usepackage[shortlabels]{enumitem}
\usepackage{youngtab}
\usepackage{tikz}
\usepackage{xcolor}

\newtheorem{theorem}{Theorem}[section]
\newtheorem{lemma}[theorem]{Lemma}

\theoremstyle{definition}
\newtheorem{definition}[theorem]{Definition}
\newtheorem{example}[theorem]{Example}

\numberwithin{equation}{section}
\allowdisplaybreaks
\begin{document}

\title[G\"ollnitz-Gordon-Andrews identities via commutative algebra]
 {On the G\"ollnitz-Gordon-Andrews identities via commutative algebra}

\author[Rupam Barman]{Rupam Barman}
\address{Department of Mathematics, Indian Institute of Technology Guwahati, Assam, India, PIN- 781039}
\email{rupam@iitg.ac.in}

 \author[Alapan Ghosh]{Alapan Ghosh}
 \address{Department of Mathematics, Indian Institute of Technology Guwahati, Assam, India, PIN- 781039}
 \email{alapan.ghosh@iitg.ac.in}
 
\author[Gurinder Singh]{Gurinder Singh}
\address{Postdoctoral Research Station of Mathematics, Hebei Normal University, Shijiazhuang 050024, P. R. China}
\email{gurindermaan1018@gmail.com}

\date{July 31, 2026}

\subjclass[2010]{11P81, 11P84, 13D40, 13A02}

\keywords{G\"ollnitz-Gordon-Andrews identities, partition identities, Hilbert-Poincar\'e series}

\begin{abstract} 
The G\"ollnitz-Gordon-Andrews identities generalize the classical partition identities discovered independently by H. G\"ollnitz and B. Gordon. These are Rogers-Ramanujan-type identities involving generating functions of partitions satisfying certain kinds of difference conditions on the one hand and infinite periodic products on the other. In 2021, Afsharijoo provided a commutative algebra proof of the Rogers-Ramanujan-Gordon identities. Building on Afsharijoo's approach, we investigate the G\"ollnitz-Gordon-Andrews identities using techniques from commutative algebra. More generally, we establish a broader family of identities, of which the G\"ollnitz-Gordon-Andrews identities arise as special cases. Our approach interprets the associated generating functions in terms of Hilbert-Poincar\'e series of suitably constructed graded algebras, providing the first commutative algebra framework for these identities.
\end{abstract}

\maketitle

\section{Introduction} 
A partition of a positive integer $n$ is a finite sequence of non-increasing positive integers $\lambda=(\lambda_1, \lambda_2, \ldots, \lambda_s)$ such that $\lambda_1+\lambda_2+\cdots +\lambda_s=n$. The integers $\lambda_j$ are called the parts of the partition $\lambda$. Let $p(n)$ denote the number of partitions of $n$, with the convention that $p(0):=1$. 
\par In 1748, Leonhard Euler \cite{Euler_1748} proved an elegant partition identity which says that the number of partitions of $n$ into odd parts is equal to the number of partitions of $n$ into distinct parts. Since then, numerous remarkable partition identities have been established (see, for example, \cite{Andrews_1998}). The partition identities studied in this article are the G\"ollnitz-Gordon-Andrews identities. In the 1960s, G\"ollnitz \cite{Gollnitz_1967} and Gordon \cite{Gordon_1965} independently discovered the following two identities. 
\begin{align*}
\sum_{n=0}^{\infty}\frac{q^{n^2}(1+q)(1+q^3)\cdots (1+q^{2n-1})}{(1-q^2)(1-q^4)\cdots  (1-q^{2n})}=\prod_{n=0}^{\infty}\frac{1}{(1-q^{8n+1})(1-q^{8n+4})(1-q^{8n+7})},\\
\sum_{n=0}^{\infty}\frac{q^{n^2+2n}(1+q)(1+q^3)\cdots (1+q^{2n-1})}{(1-q^2)(1-q^4)\cdots  (1-q^{2n})}=\prod_{n=0}^{\infty}\frac{1}{(1-q^{8n+3})(1-q^{8n+4})(1-q^{8n+5})}.
\end{align*}
The first identity yields that the number of partitions of any positive integer $n$ into parts congruent to $1, 4,$ or $7$ (mod $8$) is equal to the number of partitions $(\lambda_1,\lambda_2,\ldots,\lambda_s)$ of $n$ such that 
$$\lambda_m-\lambda_{m+1}\geq2~~\text{and}~~ \lambda_m-\lambda_{m+1}\geq3~~ \text{if}~ \lambda_m~\text{is~ even}.$$
The second identity gives that the number of partitions of any positive integer $n$ into parts congruent to $3,4$, or $5$ (mod $8$) is equal to the number of partitions $(\lambda_1,\lambda_2,\ldots,\lambda_s)$ of $n$ such that $\lambda_s\geq3$, $$\lambda_m-\lambda_{m+1}\geq2~~\text{and} ~~\lambda_m-\lambda_{m+1}\geq3 ~\text{if}~ \lambda_m~ \text{is~ even}.$$
\par Andrews \cite{Andrews_1967} generalized the G\"ollnitz-Gordon identities and we know it by the name of G\"ollnitz-Gordon-Andrews identities. It has been observed more recently that analytic versions of the original G\"ollnitz-Gordon identities already appear in Ramanujan's Lost Notebook \cite{RLNB}. For more information on these identities, see Section 1 of \cite{Coulson_2017}. We now state the G\"ollnitz-Gordon-Andrews identities. Let $r$ and $i$ be positive integers with $1\leq i\leq r$. Let $C_{r,i}(n)$ denote the number of partitions of $n$ into parts that are not congruent to $2$ (mod $4$) and are also not congruent to $0$ or $\pm(2i-1)$ (mod $4r$). Let $D_{r,i}(n)$ denote the number of partitions $(\lambda_1,\lambda_2,\ldots,\lambda_s)$ of $n$ satisfying the following conditions:
\begin{enumerate}[1.]
\item No odd part is repeated,
\item $\lambda_m-\lambda_{m+r-1}\geq2$ if $\lambda_m$ is odd,
\item $\lambda_m-\lambda_{m+r-1}\geq3$ if $\lambda_m$ is even, and
\item at most $i-1$ parts are equal to $1$ or $2$.
\end{enumerate} 
\begin{theorem}[G\"ollnitz-Gordon-Andrews identities]\label{Thm1}
Let $r$ and $i$ be positive integers with $1\leq i\leq r$. Then $$C_{r,i}(n)=D_{r,i}(n)~~ \text{for~ all}~ n\geq0.$$
\end{theorem}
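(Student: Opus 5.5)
Following Afsharijoo's commutative algebra proof of the Rogers--Ramanujan--Gordon identities, we would prove Theorem~\ref{Thm1} by showing that both $\sum_n C_{r,i}(n)q^n$ and $\sum_n D_{r,i}(n)q^n$ equal the Hilbert--Poincar\'e series of one graded algebra. Take $S=\mathbb{K}[x_1,x_2,x_3,\ldots]$ with $x_j$ of weight $j$. Record a partition $\lambda$ as the monomial $x_{\lambda_1}\cdots x_{\lambda_s}$, whose weight is $|\lambda|$. Each difference condition defining $D_{r,i}(n)$ forbids a finite list of monomial ``windows'':
\begin{itemize}
\item $x_{2k+1}^2$, since odd parts are not repeated;
\item the degree-$r$ monomials supported on $\{x_j,x_{j+1}\}$ with $j$ odd and top variable present;
\item those supported on $\{x_j,x_{j+1},x_{j+2}\}$ with $j$ even and top variable present;
\item the degree-$i$ monomials in $x_1,x_2$.
\end{itemize}
Let $I_{r,i}$ be the monomial ideal generated by all of these. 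Then $\sum_n D_{r,i}(n)q^n=HP_{S/I_{r,i}}(q)$, because the standard monomials of $S/I_{r,i}$ are exactly the $D_{r,i}$-partitions.

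On the product side we want an ideal $J_{r,i}$ with a Gr\"obner degeneration to $I_{r,i}$, so that $HP_{S/J_{r,i}}=HP_{S/I_{r,i}}$. As in the Rogers--Ramanujan case, $J_{r,i}$ should be built from differential or arc-space structure.
\begin{itemize}
\item Use a derivation $D$ with $D(x_j)=x_{j+1}$, and take $J_{r,i}$ generated by the $D$-iterates of a few seed polynomials. These are $f_r=x_1^r$-type seeds, deformed to respect the mod~$2$ parity twist.
\item The natural model for that twist is two interleaved ``colours'': odd-indexed variables for the distinct odd parts and even-indexed variables for the even parts. This mirrors the analytic factor $(1+q)(1+q^3)\cdots$ over $(q^2;q^2)_n$.
\end{itemize}
We would then compute $HP_{S/J_{r,i}}$ in two independent ways.
\begin{enumerate}
\item Choose a weighted reverse-lexicographic order and show that the leading terms of the generators generate exactly $I_{r,i}$, so that the generators form a Gr\"obner basis.
\item Compute $HP_{S/J_{r,i}}$ through short exact sequences $0\to S/(J:g)(-w)\to S/J\to S/(J+g)\to 0$. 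These should reproduce Andrews' $q$-difference equations for the functions $G_{r,i}(q)$, namely recurrences linking $i$ to $i-1$ and $r-i+1$ after the substitution $q\mapsto q$ shifted by $x_j\mapsto x_{j+2}$.
\end{enumerate}
Since Andrews' system with its initial conditions has a unique solution, the Jacobi triple product identifies that solution with $\prod 1/(1-q^n)$ over $n\not\equiv 2\pmod 4$, $n\not\equiv 0,\pm(2i-1)\pmod{4r}$.

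The order of steps is therefore:
\begin{enumerate}
\item Define $I_{r,i}$ and verify the standard-monomial count, which is combinatorial.
\item Define $J_{r,i}$ and the shift endomorphism $\sigma:x_j\mapsto x_{j+2}$.
\item Establish the colon-ideal identities $(I_{r,i}:x_1)=\ldots$ and $I_{r,i}+(x_1)=\ldots$ in terms of $\sigma(I_{r',i'})$ plus small extra ideals.
\item Derive the functional equations $P_{r,i}(q)-P_{r,i-1}(q)=\ldots$.
\item Match these with Andrews' analytic recurrences.
\end{enumerate}
Step~(3) can in fact be carried out entirely on the monomial ideal $I_{r,i}$, which sidesteps Gr\"obner theory for the identity itself. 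Gr\"obner theory is then needed only to give the ``commutative algebra'' interpretation of the product side.

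We expect the main obstacle to be step~(3), the colon and sum computations with $x_1,x_2$. Removing a part equal to $1$ or $2$ and shifting the remaining parts by $2$ does not preserve the parity classes cleanly: an even part shifted by $2$ keeps its gap rule, but the interplay between the $\ge 2$ and $\ge 3$ conditions across the boundary has to be checked carefully. The extra factor $(1+q^{2n-1})$ arises from choosing whether a lone $1$ appears. We would first handle the case $r=2$, the G\"ollnitz--Gordon case, explicitly to find the right decomposition, then generalize by induction on $i$ with $r$ fixed.
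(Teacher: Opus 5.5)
You start from the same object as the paper. That object is the monomial ideal whose standard monomials are the $D_{r,i}$-partitions, handled purely through the colon/sum formula. So the deformed differential ideal $J_{r,i}$, which you leave unspecified, is not needed, and the paper never uses it.

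There is one slip in your generators. The window for $\lambda_m$ odd is $x_{2c}^{r-1}x_{2c+1}$, i.e. $\{x_j,x_{j+1}\}$ with $j$ \emph{even}. With $j$ odd, your second family is already covered by your other families, and for $r=2$ the forbidden monomial $x_4x_5$, i.e. the partition $(5,4)$, is not in your ideal.

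\textbf{The gap.} It lies in steps (3)--(5): with one-variable series $P_{r,i}(q)$ the functional equations do not close. The shift $\sigma\colon x_j\mapsto x_{j+2}$ raises the weight of a monomial by twice its degree, so $\mathrm{HP}(S_3/\sigma(I_{r,\ell}))$ is not determined by $P_{r,\ell}(q)$. Concretely, take the colon by $x_1$ once and then by $x_2$ repeatedly, passing through intermediate ideals that start at the even index $2$; these are your ``small extra ideals''. This gives exactly the case $k=1$ of Lemma~\ref{Lemma2.1}:
\[
\mathrm{HP}^{1}_{i}=\sum_{j=1}^{i-1}q^{2j-1}\,\mathrm{HP}^{3}_{r-j+1}+\sum_{j=1}^{i}q^{2(j-1)}\,\mathrm{HP}^{3}_{r-j+1}.
\]
The series $\mathrm{HP}^{3}_{\ell}$ on the right are new unknowns, not the $P_{r,\ell}(q)$. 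So there is no closed system $P_{r,i}-P_{r,i-1}=\cdots$ to which a uniqueness argument applies. Induction on $i$ with $r$ fixed also cannot work, because the right-hand side always involves $\ell=r$ at the next level.

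\textbf{Two ways to close it.}

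(a) Grade also by degree, setting $D_i(x)=\sum_\lambda x^{\#\lambda}q^{|\lambda|}$. Since $\sigma$ preserves parity, the bigraded $\mathrm{HP}^{3}_{\ell}$ is $D_\ell(xq^2)$. The bigraded form of the relation above then gives $D_1(x)=D_r(xq^2)$ and
\[
D_i(x)-D_{i-1}(x)=(xq^2)^{i-1}\bigl(D_{r-i+1}(xq^2)+q^{-1}D_{r-i+2}(xq^2)\bigr),\qquad 2\le i\le r .
\]
Comparing coefficients of $x^n$ gives $d_{i,n}=q^{2n}d_{r,n}+(\text{terms with smaller }n)$. Hence the system has a unique solution with $D_i(0)=1$. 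This is the $x$-dependent system (base $q^2$) that Andrews' analytic proof works with. Only after matching it can you set $x=1$ and invoke the triple product.

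(b) The paper's route keeps one-variable series but uses the starting index $2J+1$ as the extra parameter. It proves the $J$-generalization, Theorem~\ref{Thm2}, and gets Theorem~\ref{Thm1} as the case $J=0$. Iterating Lemma~\ref{Lemma2.1} expresses $\mathrm{HP}^{2J+1}_i$ through the $\mathrm{HP}^{2d+1}_{r-j+1}$. The coefficients obey the same recursion and initial values as Coulson et al.'s coefficients for $\mathcal{C}_{(r-1)J+\ell}$. Uniqueness is replaced by a $q$-adic limit $d\to\infty$:
\begin{itemize}
\item $\mathrm{HP}^{2d+3}=1+O(q^{2d+3})$;
\item $\mathcal{C}_{(r-1)(d+1)+1}\to 1$;
\item the coefficients with $m\ge 2$ tend to $0$.
\end{itemize}

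Route (a) stays closer to Andrews and needs no $J$-family. Route (b) needs no second grading, but imports the product-side recursion and limit from Coulson et al.
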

The case $r=2$ in Theorem~\ref{Thm1} yields the classical G\"ollnitz-Gordon identities, while the case $r=1$ leads to the trivial identity $1=1$. Throughout this article, we assume $r\geq2$.
\par For $1\leq i\leq r$, define
\begin{align}\label{1.1}
\mathcal{C}_i(q):=\prod_{\substack{m\geq1,~m\not\equiv2\pmod4\\ m\not\equiv0,~2r\pm(2i-1)\pmod{4r}}}\frac{1}{(1-q^m)}.
\end{align}
Note that $\mathcal{C}_{r-i+1}(q)$ is the generating function for $C_{r,i}(n)$.
For $g\geq1$, the series $\mathcal{C}_{(r-1)g+i}(q)$ is defined recursively (see \cite[Section 2]{Coulson_2017}) as follows. For $i=1$,
\begin{align*}
\mathcal{C}_{(r-1)g+1}(q)=\mathcal{C}_{(r-1)(g-1)+r}(q),
\end{align*}
and for $i=2,\ldots,r$,
\begin{align*}
\mathcal{C}_{(r-1)g+i}(q)=\frac{\mathcal{C}_{(r-1)(g-1)+r-i+1}(q)-\mathcal{C}_{(r-1)(g-1)+r-i+2}(q)}{q^{2g(i-1)}}-\frac{\mathcal{C}_{(r-1)g+i-1}(q)}{q}.
\end{align*}
Let $r$, $i$, and $J\geq0$ be integers with $1\leq i\leq r$. Let $E_{r,i,J}(n)$ denote the number of partitions $(\lambda_1,\lambda_2,\ldots,\lambda_s)$ of $n$ satisfying the following conditions:
\begin{enumerate}[1.]
\item No odd part is repeated,
\item $\lambda_m-\lambda_{m+r-1}\geq2$ if $\lambda_m$ is odd,
\item $\lambda_m-\lambda_{m+r-1}\geq3$ if $\lambda_m$ is even,
\item all parts are greater than $2J$, and
\item at most $i-1$ parts are equal to $2J+1$ or $2J+2$.
\end{enumerate} 
Let $\mathcal{E}_{r,i,J}(q)$ denote the generating function for $E_{r,i,J}(n)$. The following theorem, due to Coulson \textit{et al.} \cite{Coulson_2017}, generalizes the G\"ollnitz-Gordon-Andrews identities.
\begin{theorem}\cite[Proposition 6.2]{Coulson_2017}\label{Thm2}
For any nonnegative integer $J$ and $1\leq i\leq r$, we have
\begin{align}\label{ll1}
\mathcal{C}_{(r-1)J+\ell}(q)=\mathcal{E}_{r,i,J}(q),
\end{align}
where $\ell=r-i+1$.
\end{theorem}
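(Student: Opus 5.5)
Theorem~\ref{Thm2} will be proved by induction on $J$. The case $J=0$ is exactly Theorem~\ref{Thm1}: the condition ``all parts greater than $0$'' is vacuous, so $\mathcal{E}_{r,i,0}(q)$ is the generating function of $D_{r,i}(n)$. On the other side, $\mathcal{C}_{r-i+1}(q)=\mathcal{C}_\ell(q)$ is the generating function of $C_{r,i}(n)$. For the inductive step, the plan is to show that the combinatorial side $\mathcal{E}_{r,i,J}(q)$ satisfies the same recursion that defines $\mathcal{C}_{(r-1)J+\ell}(q)$. Writing $g=J$, the recursion for $\ell=r-i+1$ reads as follows.
\begin{itemize}
\item The case $\ell=1$ (that is, $i=r$) requires $\mathcal{E}_{r,r,J}=\mathcal{E}_{r,1,J-1}$.
\item The cases $\ell=2,\ldots,r$ require
\begin{align*}
\mathcal{E}_{r,i,J}(q)=\frac{\mathcal{E}_{r,\,i,\,J-1}(q)-\mathcal{E}_{r,\,i-1,\,J-1}(q)}{q^{2J(\ell-1)}}-\frac{\mathcal{E}_{r,\,i+1,\,J}(q)}{q}.
\end{align*}
\end{itemize}
Here I used $r-\ell+1=i$ and $r-\ell+2=i-1$ to translate the indices of the recursion. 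With these identities in hand, the induction closes, with $\ell$ increasing inside each level $J$.

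For the first identity, I would observe that $\mathcal{E}_{r,r,J}$ allows at most $r-1$ parts in $\{2J+1,2J+2\}$. By the difference conditions, applied with $\lambda_m-\lambda_{m+r-1}\ge 2$ or $3$, this bound holds automatically. Meanwhile, $\mathcal{E}_{r,1,J-1}$ forbids parts equal to $2J-1$ and $2J$, which is the same as saying all parts exceed $2J$. So this identity is a direct check.

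For the second identity, clear denominators to get
\[
q\,\mathcal{E}_{r,i,J}+q^{2J(\ell-1)}\mathcal{E}_{r,i+1,J}=q\bigl(\mathcal{E}_{r,i,J-1}-\mathcal{E}_{r,i-1,J-1}\bigr)q^{-2J(\ell-1)}\cdot q^{2J(\ell-1)}.
\]
Up to the normalization I will fix when writing it out, this should be proved combinatorially. The difference $\mathcal{E}_{r,i,J-1}-\mathcal{E}_{r,i-1,J-1}$ counts partitions with parts $>2J-2$ having exactly $i-1$ parts in $\{2J-1,2J\}$. Call this number of small parts $i-1=r-\ell$. The idea is to strip off these small parts and shift. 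Removing the $i-1$ smallest parts and subtracting $2$ from every remaining part would lower the weight by roughly $2J(\ell-1)$-type amounts.

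The combinatorics is less transparent, and this is where I expect the main obstacle. Since the paper's approach is commutative algebra (following Afsharijoo), I would instead realize each $\mathcal{E}_{r,i,J}$ as a Hilbert--Poincaré series. The plan is to take $A=\mathbb{K}[x_1,x_2,\ldots]$, with $x_j$ of weight $j$ recording a part $j$, and quotient by a monomial (leading) ideal. That ideal encodes the following conditions:
\begin{itemize}
\item $x_{2k+1}^2$, since no odd part is repeated;
\item the length-$r$ windows violating the difference conditions;
\item the variables $x_1,\ldots,x_{2J}$;
\item all degree-$i$ monomials in $x_{2J+1},x_{2J+2}$.
\end{itemize}
The recursion should then come from short exact sequences $0\to A/(I:f)(-\deg f)\to A/I\to A/(I+f)\to 0$ for $f=x_{2J+1}$ or $x_{2J+2}$. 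One then identifies the colon ideals with shifted copies of the ideals of smaller $J$ via the substitution $x_j\mapsto x_{j+2}$. That substitution multiplies the generating function by $q^{2\cdot(\text{number of parts})}$, which is not a plain power of $q$. This is the genuine difficulty, and it is why one needs the auxiliary two-variable series. So I would carry the proof with a bivariate Hilbert series in $(q,z)$, with $z$ counting parts. I would derive the bivariate recurrence $\mathcal{E}_{r,i,J}(z,q)=\mathcal{E}_{r,i,J-1}(zq^2,q)$-type relations, and only specialize at the end to recover the stated recursion. Computing the colon ideals exactly, via a Gröbner basis for the difference-condition ideal, is where I expect most of the work to lie.
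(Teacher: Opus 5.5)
\textbf{Genuine gap: the recursion you set out to prove is mis-indexed.} Your overall architecture (show that $\mathcal{E}$ obeys the defining recursion of $\mathcal{C}$, then induct on $J$) can work, and your verification of $\mathcal{E}_{r,r,J}=\mathcal{E}_{r,1,J-1}$ is correct. The error is in translating the recursion for $\ell\ge 2$.

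Since $\mathcal{C}_{(r-1)J'+\ell'}$ corresponds to $\mathcal{E}_{r,r-\ell'+1,J'}$, the offset $r-\ell'+1$ has to be converted back:
\begin{itemize}
\item the term $\mathcal{C}_{(r-1)(J-1)+(r-\ell+1)}$ has offset $\ell'=i$, so it is $\mathcal{E}_{r,\ell,J-1}$, not $\mathcal{E}_{r,i,J-1}$;
\item the term $\mathcal{C}_{(r-1)(J-1)+(r-\ell+2)}$ has offset $i+1$ (not $i-1$), so it is $\mathcal{E}_{r,\ell-1,J-1}$.
\end{itemize}
The identity you actually need is
\[
\mathcal{E}_{r,\ell,J-1}-\mathcal{E}_{r,\ell-1,J-1}=q^{2J(\ell-1)}\,\mathcal{E}_{r,i,J}+q^{2J(\ell-1)-1}\,\mathcal{E}_{r,i+1,J}.
\]
The version you wrote is false. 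For $i=1$ it contains the undefined $\mathcal{E}_{r,0,J-1}$. For $r=4$, $i=2$, $J=1$, your numerator $\mathcal{E}_{4,2,0}-\mathcal{E}_{4,1,0}$ contains the term $q$ (from the partition $(1)$), so dividing by $q^{4}$ produces a $q^{-3}$ term that nothing else cancels. Your cleared-denominator display is also not a valid rearrangement of either formula.

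\textbf{The obstacle you identified is an artifact of this error.} You noticed that removing $i-1$ parts of size about $2J$ does not account for a factor $q^{2J(\ell-1)}$, and concluded that parts must be shifted. That led you to the substitution $x_j\mapsto x_{j+2}$, a bivariate series and a Gr\"obner basis computation. None of this is needed. With the correct indices, the left side counts partitions with exactly $\ell-1$ parts in $\{2J-1,2J\}$. Since $2J-1$ cannot repeat, there are only two cases:
\begin{itemize}
\item the small parts are $\ell-1$ copies of $2J$ (weight $2J(\ell-1)$);
\item they are $\ell-2$ copies of $2J$ together with one $2J-1$ (weight $2J(\ell-1)-1$).
\end{itemize}
A part lying $r-1$ places above a $2J$ must be at least $2J+3$. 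The constraint coming from $2J-1$ is automatic for parts exceeding $2J$. So the difference conditions across the boundary say exactly that the remaining parts, all $>2J$, contain at most $i-1$ (first case) or at most $i$ (second case) parts in $\{2J+1,2J+2\}$. These are counted by $\mathcal{E}_{r,i,J}$ and $\mathcal{E}_{r,i+1,J}$, and no part is shifted. Algebraically, this is Lemma~\ref{Lemma2.1} with $k=2J-1$, read through \eqref{eqj2}, after subtracting the formulas for $\ell$ and $\ell-1$. The paper gets it from $(L_k^\ell:x_k)=(x_k,x_{k+1}^{\ell-1},L_{k+1}^{\ell})=(x_k,L_{k+1}^{\ell-1})$ and the analogous computation at the even index $k+1$.

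\textbf{Dependency on Theorem~\ref{Thm1}.} Even after this repair, your induction uses Theorem~\ref{Thm1} (Andrews' theorem) as its base case. It therefore proves Theorem~\ref{Thm2} only relative to Theorem~\ref{Thm1}, and cannot recover Theorem~\ref{Thm1} as a corollary, as the paper does. The paper needs no base case:
\begin{itemize}
\item it iterates Lemma~\ref{Lemma2.1} downward, expressing $\mathrm{HP}_i^{2J+1}$ through the series $\mathrm{HP}^{2d+1}_{r-j+1}$ for every $d\ge J+1$ (Lemma~\ref{Lemma2.2});
\item it matches these coefficients with Coulson et al.'s expansion of $\mathcal{C}_{(r-1)J+\ell}$ (Lemmas~\ref{Lemma2.3} and~\ref{Lemma2.4});
\item it lets $d\to\infty$, using $\mathrm{HP}^{2d+3}=1+O(q^{2d+3})$ and $\mathcal{C}_{(r-1)d+1}\to 1$.
\end{itemize}
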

The case $J=0$ in Theorem~\ref{Thm2} gives the G\"ollnitz-Gordon-Andrews identities; hence Theorem~\ref{Thm1} is a direct corollary of Theorem~\ref{Thm2}. Note that $E_{r,i,0}(n)=D_{r,i}(n)$. For $J\geq 1$, however, no direct partition-theoretic interpretation of $\mathcal{C}_{(r-1)J+\ell}(q)$ is presently known, in contrast to the case $J=0$, where $\mathcal{C}_{\ell}(q)$ serves as the generating function of the partition function $C_{r,i}(n)$.
\par Recently, Afsharijoo and Mourtada \cite{Mourtada-new-1} introduced a commutative algebraic approach to partition identities via {G}r\"obner bases and Hilbert-Poincar\'{e} series. Their work provides the first commutative algebra proofs of the Rogers-Ramanujan identities and their extensions. Subsequently, Afsharijoo, Dousse, Jouhet, Mohsen, and Mourtada further developed and applied similar techniques from commutative algebra to the study of integer partitions; see, for example, \cite{Mourtada-new-2, Mourtada-new-3, Mourtada-new-4, Mourtada-new-5}. In 2021, Afsharijoo~\cite{Afsharijoo_2021} provided a commutative algebra proof of the Rogers-Ramanujan-Gordon identities. In \cite[Theorem 8.8]{Coulson_2017}, Coulson \textit{et al.} further introduced a generalization of these identities, referred to as the $J$-generalization, analogous to Theorem~\ref{Thm2}. In \cite{Ghosh-Barman}, the first and second authors extended Afsharijoo's approach to obtain a commutative algebra proof of the $J$-generalization of the Rogers-Ramanujan-Gordon identities. To the best of our knowledge, however, a commutative algebraic treatment of the G\"ollnitz-Gordon-Andrews identities has not yet been explored.
\par 
The main objective of this article is to provide a proof of the $J$-generalization of the G\"ollnitz-Gordon-Andrews identities using methods from commutative algebra. Our approach is inspired by the methods of Afsharijoo~\cite{Afsharijoo_2021} and Bruschek \emph{et al.}~\cite{Mourtada}. In particular, we analyze the identities in Theorem~\ref{Thm2} by establishing a connection between their partition-theoretic interpretations and the Hilbert-Poincar\'e series of suitably constructed graded algebras. This perspective yields the first commutative algebra framework for these identities.
\section{Preliminaries}\label{Section2}
In this section, we enlist some definitions and results from commutative algebra and topology. For more details, see for example \cite{Atiyah_book, Eisenbud, Pfister_book}.
\begin{definition}[Graded ring]
A graded ring is a ring $A$ together with a family $(A_j)_{j \geq 0}$ of subgroups of the additive group of $A$, such that $A=\bigoplus_{j=0}^{\infty} A_j$ and $A_{j_1}A_{j_2} \subseteq A_{j_1+j_2}$ for all $j_1,j_2 \geq 0$.
\end{definition}
Here $A_0$ is a subring of $A$ and each $A_j$ is an $A_0$-module. A non-zero element of $A_j$ is said to be homogeneous component of degree $j$.
\begin{definition}[Homogeneous ideal]
An ideal $I$ of a graded ring $A$ is called homogeneous if it is generated by homogeneous components.
\end{definition}
The intersection of a homogeneous ideal $I$ with $A_j$ is an $A_0$-submodule of $A_j$, called the homogeneous part of degree $j$ of $I$. A homogeneous ideal $I$ is the direct sum of its homogeneous parts $I_j=I \cap A_j$, i.e., $I=\bigoplus_{j=0}^{\infty} I_j$. If $I$ is a homogeneous ideal of a graded ring $A$, then the quotient ring $\frac{A}{I}$ is also a graded ring, decomposed as $$\frac{A}{I}=\bigoplus_{j=0}^{\infty} \frac{A_j}{I_j}.$$  
\begin{definition}[Graded $\mathbb{F}$-algebra]
Let $\mathbb{F}$ be a field. A graded ring $A=\bigoplus_{j=0}^{\infty} A_j$ is called a graded $\mathbb{F}$-algebra if it is also an $\mathbb{F}$-algebra, and $A_j$ is a vector space for all $j\geq0$ with $A_0=\mathbb{F}$.
\end{definition}
\begin{definition}[Weight of a polynomial]
The weight of the monomial $x_{i_1}^ {{\alpha}_1}\cdots x_{i_m}^{{\alpha}_m} \in \mathbb{F}[x_1,x_2,\ldots]$ is defined as $\sum_{k=1}^{m}i_k\alpha_k$. A polynomial $f(x) \in  \mathbb{F}[x_1,x_2,\ldots] $ is said to be a homogeneous polynomial of weight $a$ if every monomial of $f(x)$ has the same weight $a$.
\end{definition}
\begin{example}[Gradation by weight]\label{Example2}
Let $\mathbb{F}$ be a field of characteristic zero. Then $A:=\mathbb{F}[x_1,x_2,\ldots]$ is a graded algebra. $A$ is graded by weight, i.e., $A=\bigoplus_{j=0}^{\infty} A_j$, where $A_j$ is the set of polynomials of weight $j$ along with zero polynomial.
\end{example}
\begin{definition}[Hilbert-Poincar\'e series]
Let $\mathbb{F}$ be a field of characteristic zero and $A=\bigoplus_{j=0}^{\infty} A_j$ be a graded $\mathbb{F}$-algebra such that $\dim_{\mathbb{F}}(A_j)< \infty$. Then the Hilbert-Poincar\'e series of $A$ is $$\mathrm{HP}_A(q):=\sum_{j \geq 0}\dim_{\mathbb{F}}(A_j)q^j.$$
\end{definition}
Consider the graded algebra $A=\mathbb{F}[x_1,x_2,x_3,\ldots]$, graded by weight. Let $I$ be a homogeneous ideal of $A$, and let $f\in A$ be a homogeneous polynomial of weight $w$. Then, by \cite[Lemma 5.2.2]{Pfister_book}, we have
\begin{align}\label{HP_formula}
\mathrm{HP}_{\frac{A}{I}}(q)=q^w\mathrm{HP}_{\frac{A}{(I:f)}}(q)+\mathrm{HP}_{\frac{A}{(I,f)}}(q),
\end{align}
where $(I:f)=\{g\in A~\vert ~fg\in I\}$.  
\par We briefly recall some facts concerning the Krull topology. For further details, see, for example \cite{Eisenbud}. Let $I$ be an ideal of ring $A$. The Krull topology (or $I$-adic topology) on ring $A$ is defined by declaring a subset $U$ of $A$ to be open if, for every $x \in U$, there exists $j \in \mathbb{N}$ such that $x+I^j \subseteq U$. A sequence $(a_m)$ in $A$ converges to an element $a \in A$ if, for every $j \in \mathbb{N}$, there exists $N \in \mathbb{N}$ such that $a_m-a \in I^j$ for all $m \geq N$. In this article, we work with the $q$-adic topology, where $A=\mathbb{F}[[q]]$ and $I$ is the ideal generated by $q$ in $A$. 

\section{A Proof of Theorem~\ref{Thm2}}\label{Section3}
To prove the identities in Theorem~\ref{Thm2} for $r \geq 2$, we first relate the generating function of $E_{r,i,J}(n)$ to the Hilbert-Poincar\'e series of a suitable graded algebra. Let $\mathbb{F}$ be a field of characteristic zero. We consider the graded algebra $S:=\mathbb{F}[x_1,x_2,x_3,\ldots]$, where the gradation is by weight as described in Example~\ref{Example2}. For each integer $k \geq 1$, denote by $S_k:=\mathbb{F}[x_k,x_{k+1},x_{k+2},\ldots]$, so that $S_1=S$. We write $(S_k)_j$ for the homogeneous part of degree $j$ in $S_k$.
\par Let $a, b, c, n_1$, and $n_2$ be integers. For $r\geq2$, $1\leq i\leq r$, and a fixed nonnegative integer $J$, consider the ideal
\begin{align*}
&L_{r,i,J}:=\left(x_{2J+1}^2,~x_{2J+1}x_{2J+2}^{i-1},~x_{2J+2}^{i},~x_{2a-1}^2,~x_{2b-1}x_{2b}^{r-1},~x_{2c}^{r-n_1} x_{2c+2}^{n_1},\right.\\
&\left.x_{2c}^{r-n_2-1}x_{2c+1} x_{2c+2}^{n_2}:2a-1,2b-1,2c \geq 2J+2; 0 \leq n_1 \leq r-1; 0 \leq n_2 \leq r-2\right)
\end{align*}
of $S_{2J+1}$, which is readily seen to be homogeneous. Hence, the quotient $\frac{S_{2J+1}}{L_{r,i,J}}$ inherits a graded algebra structure. 
Moreover, for each $j\geq 0$, we have
\begin{align*}
\dim_{\mathbb{F}}\left(\frac{S_{2J+1}}{L_{r,i,J}}\right)_j=\dim_{\mathbb{F}}\left(\frac{(S_{2J+1})_j}{(L_{r,i,J})_j}\right)\leq\dim_{\mathbb{F}}((S_{2J+1})_j) \leq \dim_{\mathbb{F}}((S)_j)= p(j)<\infty.
\end{align*}
This establishes the existence of Hilbert-Poincar\'e series of $\frac{S_{2J+1}}{L_{r,i,J}}$, which is defined by 
\begin{align*}
\mathrm{HP}_{\frac{S_{2J+1}}{L_{r,i,J}}}(q)=\sum_{j\geq0}\dim_{\mathbb{F}}\left(\frac{S_{2J+1}}{L_{r,i,J}}\right)_j q^j.
\end{align*}
We now relate the Hilbert-Poincar\'e series $\mathrm{HP}_{\frac{S_{2J+1}}{L_{r,i,J}}}(q)$ to the partition function $E_{r,i,J}(n)$. By construction, the ideal $L_{r,i,J}$ is generated by $x_{2J+1}^2$, $x_{2J+1}x_{2J+2}^{i-1}$, $x_{2J+2}^{i}$ and monomials of the form  
$$x_{2a-1}^2, \quad x_{2b-1}x_{2b}^{r-1}, \quad x_{2c}^{r-n_1}x_{2c+2}^{n_1}, \quad x_{2c}^{r-n_2-1}x_{2c+1} x_{2c+2}^{n_2},$$ 
where $2a-1,2b-1,2c \geq 2J+2$, $0 \leq n_1 \leq r-1$, and $0 \leq n_2 \leq r-2 $. Observe that the graded component $\left(\frac{S_{2J+1}}{L_{r,i,J}}\right)_j$ is generated by monomials of the form $x_{l_1}x_{l_2} \cdots x_{l_m} \in S_{2J+1}/L_{r,i,J}$, of weight $\sum_{p=1}^{m}l_p=j$. To each such monomial, we associate the partition $(l_1,l_2,\ldots,l_m)$ of $j$, which satisfies the defining conditions of $E_{r,i,J}(j)$. This correspondence is bijective, and hence  $$\dim_{\mathbb{F}}\left(\frac{S_{2J+1}}{L_{r,i,J}}\right)_j=E_{r,i,J}(j).$$
Consequently,
\begin{align}\label{eqj1}
\mathrm{HP}_{\frac{S_{2J+1}}{L_{r,i,J}}}(q)=\sum_{j\geq0}E_{r,i,J}(j)q^j.
\end{align}
Let $a,b,c,n_1,n_2$, and $\ell $ be integers with $1 \leq \ell \leq r$. We now define the following two ideals of $S_k$ for $k\geq2J+1$:
\begin{align*}
    L_{k}:=&\left(x_{2a-1}^2, x_{2b-1}x_{2b}^{r-1}, x_{2c}^{r-n_1} x_{2c+2}^{n_1}\right., x_{2c}^{r-n_2-1}x_{2c+1} x_{2c+2}^{n_2}:\\
    &\left.2a-1,2b-1,2c \geq k,0 \leq n_1 \leq r-1,~\text{and}~0 \leq n_2 \leq r-2 \right)
\end{align*}
and
\begin{align*}
L_{k}^{\ell}:=\begin{cases}
	\left(x_{k}^{\ell}, x_{k}^{\ell-1}x_{k+2}^{r-\ell+1},x_{k}^{\ell-2}x_{k+2}^{r-\ell+2},\ldots,x_{k}x_{k+2}^{r-1},x_{k}^{\ell-1}x_{k+1}x_{k+2}^{r-\ell}\right.,\\  
    \left.x_{k}^{\ell-2}x_{k+1}x_{k+2}^{r-\ell+1}, \ldots,x_{k}x_{k+1}x_{k+2}^{r-2},L_{k+1}\right) &\text{ if } k~ \text {is even}; \\
	\left(x_k^2,x_{k}x_{k+1}^{\ell-1},L_{k+1}^{\ell} \right) &\text{ if } k~ \text {is odd}.
\end{cases}
\end{align*}
\par We denote the Hilbert-Poincar\'e series $\mathrm{HP}_{\frac{S_{k}}{L_{k}}}(q)$ by $\mathrm{HP}^{k}$ and the Hilbert-Poincar\'e series $\mathrm{HP}_{\frac{S_{k}}{L_{k}^{\ell}}}(q)$ by $\mathrm{HP}_{\ell}^{k}$. Also, we use $\mathrm{HP}\left(\frac{A}{I}\right)$ in place of $\mathrm{HP}_{\frac{A}{I}}(q)$. With this notation, we record the following:
\begin{enumerate}
\item[(N1)] $\mathrm{HP}_1^{k}= \begin{cases}
\mathrm{HP}^{k+1}      &\text{ if } k~ \text {is even};\\
\mathrm{HP}^{k+2}      &\text{ if } k~ \text {is odd}.
\end{cases}$
\item[(N2)] $\mathrm{HP}_r^{k}=\mathrm{HP}^{k}$.
\item[(N3)] $\mathrm{HP}\left(\frac{S_{2J+1}}{L_{r,i,J}}\right)=\mathrm{HP}\left(\frac{S_{2J+1}}{L_{2J+1}^{i}}\right)=\mathrm{HP}_{i}^{2J+1}$.
\end{enumerate}
In view of (N3) and \eqref{eqj1}, it follows that
\begin{align}\label{eqj2}
\mathrm{HP}_{i}^{2J+1}=\sum_{n\geq0}E_{r,i,J}(n)q^n.
\end{align}
\par Now, we prove a recursion formula for $\mathrm{HP}_{i}^{2J+1}$. In fact, we prove a recursion formula for $\mathrm{HP}_{\ell}^{k}$, for odd positive integers $k\geq2J+1$ in the following lemma.
\begin{lemma}\label{Lemma2.1}
Let $J$ be a nonnegative integer, and let $k,r$, and $\ell$ be positive integers with $r \geq 2$ and $1\leq \ell \leq r$. Then, for any odd $k \geq 2J+1$, we have 
\begin{align}\label{n1}
\mathrm{HP}_\ell ^{k}= \sum_{j=1}^{\ell-1} q^{(k+1)j-1} \mathrm{HP}_{r-j+1} ^{k+2}+\sum_{j=1}^{\ell} q^{(k+1)(j-1)} \mathrm{HP}_{r-j+1} ^{k+2}. 
\end{align}
\end{lemma}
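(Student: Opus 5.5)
We will prove \eqref{n1} by applying the colon formula \eqref{HP_formula} repeatedly: first to peel off the powers of $x_k$ ($k$ odd), then the powers of $x_{k+1}$ ($k+1$ even). Throughout we use that all ideals involved are monomial ideals. Hence colon and sum ideals can be computed generator by generator. Moreover, if an ideal has the form $(x_{k},x_{k+1},I')$ with $I'$ generated in variables of index $\ge k+2$, then its Hilbert--Poincar\'e series is that of $S_{k+2}/I'$.

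\textbf{Step 1 (variable $x_k$).} Recall $L_k^\ell=(x_k^2,x_kx_{k+1}^{\ell-1},L_{k+1}^\ell)$. Since $x_k$ occurs in no generator of $L_{k+1}^\ell$, applying \eqref{HP_formula} with $f=x_k$ gives
\[
\mathrm{HP}_\ell^k=q^{k}\,\mathrm{HP}\Bigl(\tfrac{S_k}{(x_k,\,x_{k+1}^{\ell-1},\,L^\ell_{k+1})}\Bigr)+\mathrm{HP}\Bigl(\tfrac{S_k}{(x_k,\,L^\ell_{k+1})}\Bigr).
\]
The second term is $\mathrm{HP}(S_{k+1}/L_{k+1}^\ell)$. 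In the first term $x_{k+1}^{\ell}\in L^\ell_{k+1}$ is absorbed by $x_{k+1}^{\ell-1}$. So both terms are quotients of $S_{k+1}$ by $(x_{k+1}^{m},\dots,L_{k+2})$, with $m=\ell-1$ and $m=\ell$ respectively. When $\ell=1$ the first term involves $x_{k+1}^0=1$ and vanishes, which matches the empty first sum.

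\textbf{Step 2 (variable $x_{k+1}$, even).} For an ideal $M_m$ of the shape above, we peel off $x_{k+1}$ one power at a time. We show, by induction on $s$, that
\[
\mathrm{HP}(S_{k+1}/M_m)=\sum_{s=0}^{m-1}q^{(k+1)s}\,\mathrm{HP}\Bigl(\tfrac{S_{k+1}}{(M_m:x_{k+1}^s,\;x_{k+1})}\Bigr).
\]
The key computation is
\[
(M_m:x_{k+1}^{s})+(x_{k+1})=\bigl(x_{k+1},\;x_{k+2}^{\,r-s'},\;x_{k+2}^{\,r-s'-1}x_{k+3},\;L_{k+3}\bigr),
\]
for the appropriate $s'$ depending on $s$ and $\ell$ (for the $x_{k+1}x_{k+2}^{\dots}x_{k+3}$-type generators, the lone $x_{k+1}$ is cancelled by the colon). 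We then check from the generators of $L_{k+2}$ that $(x_{k+2}^{r-s'},\,x_{k+2}^{r-s'-1}x_{k+3},\,L_{k+3})=L_{k+2}^{r-s'}$. This holds because $k+2$ is odd, and the quadratic generator $x_{k+2}^2$ is already present via $L_{k+2}$. Hence each summand equals $\mathrm{HP}^{k+2}_{r-j+1}$ with $j=s+1$.

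Substituting back into Step 1 turns the $q^k$-term into $\sum_{j=1}^{\ell-1}q^{k+(k+1)(j-1)}\mathrm{HP}^{k+2}_{r-j+1}$. Note that $k+(k+1)(j-1)=(k+1)j-1$, which is the exponent in the first sum of \eqref{n1}. The other term gives the second sum.

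\textbf{Main obstacle.} The delicate point is the bookkeeping in Step 2. We must match exactly which generators $x_{k+1}^{\ell-t}x_{k+3}^{r-\ell+t}$ and $x_{k+1}^{\ell-t-1}x_{k+2}x_{k+3}^{r-\ell+t}$ of $L^\ell_{k+1}$ survive in the colon ideal modulo $x_{k+1}$. We must also confirm that the result is precisely $L_{k+2}^{r-j+1}$ and not a larger ideal, i.e.\ that no extra generator of $L_{k+2}$ gets shortened. We would verify this by listing, for a fixed colon exponent $s$, the exponent of $x_{k+1}$ in each generator and keeping only those with exponent $\le s$. We would also handle the edge cases $\ell=1$, where the first sum is empty, and $\ell=r$, where (N2) applies, separately.
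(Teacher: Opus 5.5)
Your strategy is the paper's. Step 1 is exactly \eqref{f2}, and your Step 2 slice expansion is the unrolled form of the repeated colon argument that gives \eqref{f3} for the even index $k+1$. For Step 1 to land on $L^{\ell-1}_{k+1}$, note that $x_{k+1}^{\ell-1}$ absorbs not only $x_{k+1}^{\ell}$ but also $x_{k+1}^{\ell-1}x_{k+3}^{r-\ell+1}$ and $x_{k+1}^{\ell-1}x_{k+2}x_{k+3}^{r-\ell}$. Then $(x_{k+1}^{\ell-1},L^\ell_{k+1})=L^{\ell-1}_{k+1}$.

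The problem is the one computation everything rests on: your Step 2 display is wrong because the exponents sit on the wrong variable. The claimed identity $(x_{k+2}^{r-s'},x_{k+2}^{r-s'-1}x_{k+3},L_{k+3})=L^{r-s'}_{k+2}$ fails for $r\ge 3$. For $r=3$, $s'=0$ the left side does not contain $x_{k+2}^2\in L_{k+2}=L^3_{k+2}$. For $r=3$, $s'=1$ it does not contain $x_{k+3}^2\in L^2_{k+3}\subset L^2_{k+2}$. So as written Step 2 produces the wrong summands, and the verification you defer to the ``main obstacle'' is precisely what is missing.

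The correct bookkeeping goes as follows. The generators of $L^m_{k+1}$ involving $x_{k+1}$ are $x_{k+1}^m$, $x_{k+1}^{m-t}x_{k+3}^{r-m+t}$ and $x_{k+1}^{m-t}x_{k+2}x_{k+3}^{r-m+t-1}$ for $1\le t\le m-1$, while $L_{k+2}$ does not involve $x_{k+1}$. Hence, for $0\le s\le m-1$,
\[
(L^m_{k+1}:x_{k+1}^{s})+(x_{k+1})=(x_{k+1},\,x_{k+3}^{r-s},\,x_{k+2}x_{k+3}^{r-s-1},\,L_{k+2})=(x_{k+1},\,L^{r-s}_{k+2}).
\]
For $s=0$ the two middle generators already lie in $L_{k+2}$.

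For the last equality, write $L_{k+2}=(x_{k+2}^2,\,x_{k+2}x_{k+3}^{r-1},\,L_{k+3})$. Adjoining $x_{k+3}^{r-s}$ to $L_{k+3}=L^r_{k+3}$ kills exactly the generators whose $x_{k+3}$-exponent is at least $r-s$, which yields $L^{r-s}_{k+3}$. Also $x_{k+2}x_{k+3}^{r-s-1}$ absorbs $x_{k+2}x_{k+3}^{r-1}$. What remains is $(x_{k+2}^2,\,x_{k+2}x_{k+3}^{r-s-1},\,L^{r-s}_{k+3})=L^{r-s}_{k+2}$.

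Thus $s'=s$, independent of $m$ and of $\ell$. This independence is what makes the expansions for $m=\ell-1$ and $m=\ell$ involve the same series $\mathrm{HP}^{k+2}_{r-s}$ and produce the two sums in \eqref{n1}. An $s'$ ``depending on $s$ and $\ell$'' would destroy that matching. With this correction, your argument coincides with the paper's proof.
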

\begin{proof}
In this proof, we repeatedly apply \eqref{HP_formula} without explicit reference. Let $k \geq 2J+1$ be an odd integer and $\ell=1$. By (N1) and (N2), we obtain
\begin{align}\label{f1}
\mathrm{HP}_{\ell}^{k}=\mathrm{HP}_1^{k}=\mathrm{HP}^{k+2}=\mathrm{HP}^{k+2}_{r}.
\end{align}
Next, let $k \geq 2J+1$ be odd and $\ell \neq 1$. Then, we have
\begin{align}
		\mathrm{HP}_{\ell}^{k}&=q^k \mathrm{HP} \left(\frac{S_k}{\left(L_{k}^{\ell}:x_k \right)}\right)+ \mathrm{HP} \left(\frac{S_k}{\left(L_{k}^{\ell},x_k \right)}\right)\nonumber \\
		&=q^k \mathrm{HP} \left(\frac{S_k}{\left(x_k,x_{k+1}^{\ell-1}, L_{k+1}^{\ell} \right)}\right) +\mathrm{HP} \left(\frac{S_{k+1}}{L_{k+1}^{\ell}}\right) \nonumber  \\
		&=q^k \mathrm{HP} \left(\frac{S_k}{\left(x_k,L_{k+1}^{\ell-1} \right)}\right)+ \mathrm{HP}_{\ell}^{k+1}\nonumber \\
		&=q^k \mathrm{HP} \left(\frac{S_{k+1}}{L_{k+1}^{\ell-1}}\right)+\mathrm{HP}_{\ell}^{k+1}\nonumber \\
		&=q^k \mathrm{HP}_{\ell-1}^{k+1}+\mathrm{HP}_{\ell}^{k+1}. \label{f2}
\end{align}
Now, suppose $k\geq 2J+1$ is even. A repeated application of \eqref{HP_formula} yields
\begin{align}
\mathrm{HP}_{\ell}^{k}&=q^k \mathrm{HP} \left(\frac{S_k}{\left(L_{k}^{\ell}:x_k \right)}\right)+ \mathrm{HP} \left(\frac{S_k}{\left(L_{k}^{\ell},x_k \right)}\right) \nonumber\\
&=q^k \mathrm{HP} \left(\frac{S_k}{\left(x_k^{\ell-1},x_{k}^{\ell-2}x_{k+2}^{r-\ell+1},\ldots,x_kx_{k+2}^{r-2}, x_{k}^{\ell-2}x_{k+1}x_{k+2}^{r-\ell},\ldots, x_kx_{k+1}x_{k+2}^{r-3}, L_{k+1}^{r-1} \right)}\right)\nonumber\\ 
&+ \mathrm{HP} \left(\frac{S_k}{\left(L_{k+1},x_k \right)}\right)
\nonumber\\
&=q^{2k} \mathrm{HP} \left(\frac{S_k}{\left(x_k^{\ell-2},x_{k}^{\ell-3}x_{k+2}^{r-\ell+1},\ldots,x_kx_{k+2}^{r-3}, x_{k}^{\ell-3}x_{k+1}x_{k+2}^{r-\ell},\ldots, x_kx_{k+1}x_{k+2}^{r-4}, L_{k+1}^{r-2} \right)}\right)\nonumber\\ 
&+  q^k \mathrm{HP} \left(\frac{S_k}{\left(x_k,L_{k+1}^{r-1} \right)}\right)+\mathrm{HP} \left(\frac{S_{k+1}}{L_{k+1}}\right) \nonumber\\
&=q^{2k} \mathrm{HP} \left(\frac{S_k}{\left(x_k^{\ell-2},x_{k}^{\ell-3}x_{k+2}^{r-\ell+1},\ldots,x_kx_{k+2}^{r-3}, x_{k}^{\ell-3}x_{k+1}x_{k+2}^{r-\ell},\ldots, x_kx_{k+1}x_{k+2}^{r-4}, L_{k+1}^{r-2} \right)}\right)\nonumber\\ 
&+ q^k \mathrm{HP}_{r-1}^{k+1}+\mathrm{HP}^{k+1}\nonumber\\
&=\cdots = \sum_{j=1}^{\ell}q^{k(j-1)}\mathrm{HP}_{r-j+1}^{k+1}.\label{f3}
\end{align}
Finally, since $k$ is odd, $k+1$ is even, hence we can substitute \eqref{f3} for $\mathrm{HP}_{\ell}^{k+1}$ in \eqref{f2}. For odd $k\geq 2J+1$ and $\ell \neq 1$, we obtain
\begin{align}\label{new-eqn-01}
\mathrm{HP}_{\ell}^{k}&=q^k \mathrm{HP}_{\ell-1}^{k+1}+\mathrm{HP}_{\ell}^{k+1} \nonumber\\
		             &=q^k \left(   \sum_{j=1}^{\ell-1}q^{(k+1)(j-1)}\mathrm{HP}_{r-j+1}^{k+2} \right)
		             + \left( \sum_{j=1}^{\ell}q^{(k+1)(j-1)}\mathrm{HP}_{r-j+1}^{k+2} \right) \nonumber\\
		             &= \sum_{j=1}^{\ell-1}q^{(k+1)j-1}\mathrm{HP}_{r-j+1}^{k+2} +  \sum_{j=1}^{\ell}q^{(k+1)(j-1)}\mathrm{HP}_{r-j+1}^{k+2}.
\end{align}
Combining \eqref{new-eqn-01} with \eqref{f1}, we obtain the recursion formula \eqref{n1} for $\mathrm{HP}_{\ell}^{k}$ for all odd $k \geq 2J+1$ and $1\leq \ell \leq r$.
\end{proof}
In the following lemma, we provide a recursion formula for $\mathrm{HP}_i ^{2J+1}$.
\begin{lemma}\label{Lemma2.2}
Let $J$ be a nonnegative integer, and let $r, i$ be integers with $r \geq 2$ and $1 \leq i \leq r$. Then the following recursion formula holds:
\begin{align}\label{t1}
\mathrm{HP}_i ^{2J+1}= \sum_{j=1}^{r} N_{i,j,(r-1)d+j}^{J} \mathrm{HP}_{r-j+1} ^{2d+1},
\end{align}
where $d \geq J+1$. The coefficients $N_{i,j,(r-1)d+j}^{J} \in \mathbb{F}[[q]]$ satisfy the recursion 
$$N_{i,j,(r-1)(d+1)+j}^{J}= q^{2(d+1)(j-1)} \sum_{m=1}^{r-j+1} N_{i,m,(r-1)d+m}^{J}+ q^{2(d+1)j-1} \sum_{m=1}^{r-j} N_{i,m,(r-1)d+m}^{J}$$
for $1 \leq j \leq r$, together with the initial conditions $($corresponding to $d=J+1$$)$:
$$N_{i,j,(r-1)(J+1)+j}^{J}=\begin{cases}
q^{2(J+1)j-1}+q^{2(J+1)(j-1)} &\text{ if } 1\leq j \leq i-1; \\
q^{2(J+1)(j-1)} &\text{ if } j=i;  \\
0 &\text{ if } i+1 \leq j \leq r.
\end{cases}$$
\end{lemma}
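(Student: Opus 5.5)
Induction on $d\ge J+1$, with Lemma~\ref{Lemma2.1} as the only input; it is used twice, once for the base case and once for the inductive step.

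\textbf{Base case $d=J+1$.} Apply Lemma~\ref{Lemma2.1} with $k=2J+1$, which is odd and satisfies $k\ge 2J+1$, and with $\ell=i$. Since $k+1=2(J+1)$ and $k+2=2d+1$, formula \eqref{n1} reads
\begin{align*}
\mathrm{HP}_i^{2J+1}=\sum_{j=1}^{i-1}q^{2(J+1)j-1}\,\mathrm{HP}_{r-j+1}^{2(J+1)+1}+\sum_{j=1}^{i}q^{2(J+1)(j-1)}\,\mathrm{HP}_{r-j+1}^{2(J+1)+1}.
\end{align*}
Collect the coefficient of each $\mathrm{HP}_{r-j+1}^{2d+1}$:
\begin{itemize}
\item for $1\le j\le i-1$ both sums contribute, giving $q^{2(J+1)j-1}+q^{2(J+1)(j-1)}$;
\item for $j=i$ only the second sum contributes, giving $q^{2(J+1)(i-1)}$;
\item for $j>i$ neither sum contributes, so the coefficient is $0$.
\end{itemize}
These are exactly the stated initial conditions, so \eqref{t1} holds for $d=J+1$.

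\textbf{Inductive step.} Assume \eqref{t1} holds for some $d\ge J+1$. Each $\mathrm{HP}_{r-m+1}^{2d+1}$ on the right has odd index $2d+1\ge 2J+1$, so Lemma~\ref{Lemma2.1} applies with $k=2d+1$ and $\ell=r-m+1$. Because $k+1=2(d+1)$, it gives
\begin{align*}
\mathrm{HP}_{r-m+1}^{2d+1}=\sum_{j=1}^{r-m}q^{2(d+1)j-1}\,\mathrm{HP}_{r-j+1}^{2d+3}+\sum_{j=1}^{r-m+1}q^{2(d+1)(j-1)}\,\mathrm{HP}_{r-j+1}^{2d+3}.
\end{align*}
When $m=r$ (so $\ell=1$) the first sum is empty, consistent with the case \eqref{f1} of the lemma.

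Substitute this into \eqref{t1} and interchange the finite double sums over $m$ and $j$. The condition $j\le r-m$ is equivalent to $m\le r-j$, and $j\le r-m+1$ is equivalent to $m\le r-j+1$. Hence the coefficient of $\mathrm{HP}_{r-j+1}^{2(d+1)+1}$ is
\begin{align*}
q^{2(d+1)(j-1)}\sum_{m=1}^{r-j+1}N^J_{i,m,(r-1)d+m}+q^{2(d+1)j-1}\sum_{m=1}^{r-j}N^J_{i,m,(r-1)d+m},
\end{align*}
which is precisely the stated recursion for $N^J_{i,j,(r-1)(d+1)+j}$. This proves \eqref{t1} for $d+1$.

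The coefficients lie in $\mathbb{F}[[q]]$ trivially: each is a finite sum of products of monomials in $q$, so they are in fact polynomials. No convergence or Krull-topology argument is needed at this stage, since every identity involves only finitely many terms.

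The only step needing care is the index bookkeeping when the two summation orders are swapped: checking the ranges $m\le r-j$ versus $m\le r-j+1$, and the boundary cases $j=r$ and $m=r$. It is also worth verifying that the exponents of $q$ match, namely $q^{(k+1)j-1}=q^{2(d+1)j-1}$ and $q^{(k+1)(j-1)}=q^{2(d+1)(j-1)}$ for $k=2d+1$.
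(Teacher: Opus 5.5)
Your proposal is correct and follows the paper's own proof essentially verbatim. The base case applies Lemma~\ref{Lemma2.1} at $k=2J+1$, $\ell=i$ and collects coefficients, and the inductive step substitutes Lemma~\ref{Lemma2.1} at $k=2d+1$, $\ell=r-m+1$ into \eqref{t1} and interchanges the finite double sums; your index ranges, the empty sums at $m=r$ and $j=r$, and the exponent matching $q^{(k+1)j-1}=q^{2(d+1)j-1}$ all check out.
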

\begin{proof}
To prove the required recursion formula \eqref{t1}, we proceed by induction on $d$. We first verify the case $d=J+1$. Applying Lemma~\ref{Lemma2.1} with $k=2J+1$ and $\ell=i$, we obtain
\begin{align*}
\mathrm{HP}_{i}^{2J+1}&=\sum_{j=1}^{i-1}q^{(2J+2)j-1}\mathrm{HP}_{r-j+1}^{2J+3} +  \sum_{j=1}^{i}q^{(2J+2)(j-1)}\mathrm{HP}_{r-j+1}^{2J+3}\\
&=q^{(2J+2)(i-1)}\mathrm{HP}_{r-i+1}^{2J+3}+\sum_{j=1}^{i-1}\left(q^{(2J+2)j-1}+q^{(2J+2)(j-1)}\right)\mathrm{HP}_{r-j+1}^{2J+3} \\
&=\sum_{j=1}^{r} N_{i,j,(r-1)(J+1)+j}^{J}\mathrm{HP}_{r-j+1} ^{2J+3}.
\end{align*}
Thus, \eqref{t1} holds for $d=J+1$. Next, we assume that \eqref{t1} holds for all integers $d$ with $J+1 \leq d \leq s$. In particular, for $d=s$, we have
$$\mathrm{HP}_{i}^{2J+1}=\sum_{j=1}^{r} N_{i,j,(r-1)s+j}^{J}\mathrm{HP}_{r-j+1} ^{2s+1}.$$
We now prove \eqref{t1} for $d=s+1$. Substituting the expression for $\mathrm{HP}_{r-j+1} ^{2s+1}$ from Lemma~\ref{Lemma2.1} into the above equation, we obtain
\begin{align*}
\mathrm{HP}_{i}^{2J+1}= \sum_{j=1}^{r} N_{i,j,(r-1)s+j}^{J} \left( \sum_{m=1}^{r-j}q^{2(s+1)m-1}\mathrm{HP}_{r-m+1}^{2s+3} +  \sum_{m=1}^{r-j+1}q^{2(s+1)(m-1)}\mathrm{HP}_{r-m+1}^{2s+3}        \right). 
\end{align*}
Rearranging the sums, we obtain
\begin{align*}
\mathrm{HP}_{i}^{2J+1}=&\sum_{\ell=1}^{r} \left( q^{2(s+1)(r-\ell)} \sum_{j=1}^{\ell} N_{i,j,(r-1)s+j}^{J}\right.\\
&\left.+ q^{2(s+1)(r-\ell+1)-1} \sum_{j=1}^{\ell-1} N_{i,j,(r-1)s+j}^{J}         \right)\mathrm{HP}_{\ell} ^{2(s+1)+1} \\
=&\sum_{\ell=1}^{r} N_{i,r-\ell+1,(r-1)(s+1)+(r-\ell+1)}^{J} \mathrm{HP}_{\ell} ^{2(s+1)+1} \\
=&\sum_{j=1}^{r} N_{i,j,(r-1)(s+1)+j}^{J}\mathrm{HP}_{r-j+1} ^{2(s+1)+1}.
\end{align*}
This establishes \eqref{t1} for $d=s+1$, and hence completes the induction. Therefore, the recursion formula \eqref{t1} holds for all $d\geq J+1$. 
\end{proof}
Next, we write a recursion formula for $\mathcal{C}_{(r-1)J+\ell}$, which is given by Coulson \textit{et al.} \cite{Coulson_2017}.
\begin{lemma}[\cite{Coulson_2017}]\label{Lemma2.3}
Let $J$ be a nonnegative integer and  $r,\ell$ be integers with $r \geq 2,1 \leq \ell \leq r$. Then for $d \geq J+1$ we have the following recursion formula
$$\mathcal{C}_{(r-1)J+\ell}=\sum_{j=1}^{r} M_{\ell,j,(r-1)d+j}^J \mathcal{C}_{(r-1)d+j}.$$
Here, the coefficients $M_{\ell,j,(r-1)d+j}^J \in \mathbb{F}[[q]]$ satisfy the following recursion formula for $1 \leq j \leq r$  
$$M_{\ell,j,(r-1)(d+1)+j}^J= q^{2(d+1)(j-1)} \sum_{m=1}^{r-j+1} M_{\ell,m,(r-1)d+m}^J+ q^{2(d+1)j-1} \sum_{m=1}^{r-j} M_{\ell,m,(r-1)d+m}^J$$
with the following initial conditions (for $d=J+1$)
$$M_{\ell,j,(r-1)(J+1)+j}^J=\begin{cases}
q^{2(J+1)j-1}+q^{2(J+1)(j-1)} &\text{ if } 1\leq j \leq r-\ell; \\
q^{2(J+1)(j-1)} &\text{ if } j=r-\ell+1;  \\
0 &\text{ if } r-\ell+2 \leq j \leq r.
\end{cases}$$
\end{lemma}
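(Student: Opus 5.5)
The plan is to mirror the proof of Lemma~\ref{Lemma2.2}, with the defining recursion of the series $\mathcal{C}_{(r-1)g+i}$ playing the role of Lemma~\ref{Lemma2.1}. For brevity write $\mathcal{C}_{d,\ell}:=\mathcal{C}_{(r-1)d+\ell}$.

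\textbf{One-step formula.} First I would derive, for every $d\geq0$ and $1\leq\ell\leq r$,
\begin{align}\label{planC}
\mathcal{C}_{d,\ell}=\sum_{m=1}^{r-\ell}q^{2(d+1)m-1}\,\mathcal{C}_{d+1,m}+\sum_{m=1}^{r-\ell+1}q^{2(d+1)(m-1)}\,\mathcal{C}_{d+1,m}.
\end{align}
This is the exact analogue of \eqref{n1} with $k=2d+1$, where $\mathrm{HP}^{k}_{\ell}$ corresponds to $\mathcal{C}_{d,r-\ell+1}$.

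To obtain \eqref{planC}, I would take the defining recursion with $g=d+1$ and $2\leq i\leq r$, and multiply through by $q^{2(d+1)(i-1)}$. This gives
$$\mathcal{C}_{d,r-i+1}-\mathcal{C}_{d,r-i+2}=q^{2(d+1)(i-1)}\mathcal{C}_{d+1,i}+q^{2(d+1)(i-1)-1}\mathcal{C}_{d+1,i-1}.$$
The case $i=1$ of the definition gives $\mathcal{C}_{d,r}=\mathcal{C}_{d+1,1}$. Put $i_0=r-\ell+1$ and sum the displayed identities for $i=2,\ldots,i_0$. The left side telescopes to $\mathcal{C}_{d,\ell}-\mathcal{C}_{d,r}$. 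Adding $\mathcal{C}_{d,r}=\mathcal{C}_{d+1,1}$ supplies the $m=1$ term of the second sum in \eqref{planC}. The remaining first terms give $\sum_{m=2}^{i_0}q^{2(d+1)(m-1)}\mathcal{C}_{d+1,m}$. In the second terms, substitute $m=i-1$ to get $\sum_{m=1}^{i_0-1}q^{2(d+1)m-1}\mathcal{C}_{d+1,m}$. This proves \eqref{planC}. The case $\ell=r$ is immediate from $\mathcal{C}_{d,r}=\mathcal{C}_{d+1,1}$.

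\textbf{Base case and induction.} For the base case $d=J+1$, apply \eqref{planC} with $d=J$. The coefficient of $\mathcal{C}_{J+1,j}$ is $q^{2(J+1)j-1}+q^{2(J+1)(j-1)}$ for $j\leq r-\ell$, is $q^{2(J+1)(j-1)}$ for $j=r-\ell+1$, and is $0$ for larger $j$. These are exactly the stated initial values of $M^J$.

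For the step from $d$ to $d+1$, substitute \eqref{planC}, with $\ell$ replaced by $j$, into $\mathcal{C}_{J,\ell}=\sum_j M^J_{\ell,j,(r-1)d+j}\,\mathcal{C}_{d,j}$. Then interchange the finite double sums, exactly as in the proof of Lemma~\ref{Lemma2.2}. The coefficient of $\mathcal{C}_{d+1,m}$ collects all $j$ with $m\leq r-j+1$ from the second sum and all $j$ with $m\leq r-j$ from the first. This yields
$$q^{2(d+1)(m-1)}\sum_{j=1}^{r-m+1}M^J_{\ell,j,(r-1)d+j}+q^{2(d+1)m-1}\sum_{j=1}^{r-m}M^J_{\ell,j,(r-1)d+j},$$
which is the claimed recursion. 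All sums are finite and the identities are in $\mathbb{F}[[q]]$, so no convergence issue arises.

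\textbf{Main obstacle.} The only delicate point is the index bookkeeping in the telescoping step. One must verify the endpoint conventions: the $i=1$ clause of the definition supplies the $m=1$ term, and the shift $m=i-1$ makes the upper limits come out as $r-\ell$ and $r-\ell+1$. Once \eqref{planC} is in hand, the rest is a verbatim copy of the proof of Lemma~\ref{Lemma2.2}.
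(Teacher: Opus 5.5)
Your proof is correct, and it takes a different route from the paper. The paper does not prove this lemma itself: it imports the expansion from (5.9) of Coulson \emph{et al.}\ (with $G$ replaced by $\mathcal{C}$) and the coefficient recursion from their Proposition 5.1, matching notation via ${}_i^{J}h_{l}^{(j)}$ with $j\mapsto d$, $i\mapsto\ell$, $l\mapsto j$.

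You instead derive everything from the defining recursion of $\mathcal{C}_{(r-1)g+i}$:
\begin{itemize}
\item multiplying through by $q^{2(d+1)(i-1)}$ and telescoping over $i=2,\dots,r-\ell+1$ gives a one-step expansion of $\mathcal{C}_{(r-1)d+\ell}$ in the $\mathcal{C}_{(r-1)(d+1)+m}$;
\item the case $d=J$ of this expansion produces exactly the stated initial values;
\item the inductive step is the same interchange of finite double sums as in Lemma~\ref{Lemma2.2}.
\end{itemize}
The telescoping endpoints and the coefficient extraction in the inductive step both check out.

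The citation is shorter, but your argument is self-contained and removes any risk in the notational translation. It also exposes the structural reason behind Lemma~\ref{Lemma2.4}. Under the dictionary $\mathrm{HP}^{2d+1}_{\ell'}\leftrightarrow\mathcal{C}_{(r-1)d+r-\ell'+1}$, your one-step identity is literally \eqref{n1} with $k=2d+1$. So $M$ and $N$ come from iterating the same relation from the same starting data, and Lemma~\ref{Lemma2.4} becomes essentially automatic.

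One small correction: the defining recursion divides by powers of $q$, so for $g\ge 1$ the series $\mathcal{C}_{(r-1)g+i}$ are a priori only Laurent series. Your identities should therefore be stated in $\mathbb{F}((q))$ rather than $\mathbb{F}[[q]]$, unless you separately invoke that they are power series. This changes nothing in your argument, and the coefficients $M$ are in any case polynomials in $q$.
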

\begin{proof}
The recursion formula for $\mathcal{C}_{(r-1)J+\ell}$ follows from (5.9) of \cite{Coulson_2017}, with $G$ replaced by $\mathcal{C}$. The recursion formula for $M^J_{\ell,j,(r-1)d+j}$ is given in \cite[Proposition 5.1]{Coulson_2017}. In our notation, the quantity $_i ^{J} h_{l}^{(j)}$ appearing in \cite[Proposition 5.1]{Coulson_2017} is identical with $M_{\ell,j,(r-1)d+j}^{J}$ under the substitutions $j\mapsto d$, $i\mapsto \ell$, and $l\mapsto j$.
\end{proof}
To prove Theorem~\ref{Thm2}, it suffices to show that $\mathrm{HP}^{2J+1}_{i}=\mathcal{C}_{(r-1)J+\ell}$ (see \eqref{eqj2}), where $\ell=r-i+1$. To this end, we first verify that the coefficients appearing in the recursion formulas for $\mathrm{HP}^{2J+1}_{i}$ (Lemma~\ref{Lemma2.2}) and $\mathcal{C}_{(r-1)J+\ell}$ (Lemma~\ref{Lemma2.3}) coincide.
\begin{lemma}\label{Lemma2.4}
For all $d \geq J+1$, $r \geq 2$, and $1 \leq v \leq r$, we have 
\begin{align}\label{neq1}
M_{\ell,v,(r-1)d+v}^J=N_{i,v,(r-1)d+v}^J,
\end{align}
where $\ell=r-i+1$.
\end{lemma}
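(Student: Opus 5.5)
Induction on $d$ is all that is needed, since the coefficients $M^J_{\ell,j,(r-1)d+j}$ of Lemma~\ref{Lemma2.3} and $N^J_{i,j,(r-1)d+j}$ of Lemma~\ref{Lemma2.2} obey \emph{the same} linear recursion in $d$. Comparing the two displayed recursions, the coefficient of $\sum_{m=1}^{r-j+1}$ is $q^{2(d+1)(j-1)}$ in both, and the coefficient of $\sum_{m=1}^{r-j}$ is $q^{2(d+1)j-1}$ in both. So the $(d+1)$-level coefficients are obtained from the $d$-level ones by one and the same $\mathbb{F}[[q]]$-linear map applied to the vector $(\,\cdot_{1},\ldots,\cdot_{r})$. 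It therefore suffices to check that the initial conditions at $d=J+1$ agree.

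For the base case $d=J+1$, substitute $\ell=r-i+1$ into the initial conditions of Lemma~\ref{Lemma2.3}. Then $r-\ell=i-1$, $r-\ell+1=i$ and $r-\ell+2=i+1$. The three ranges $1\le j\le r-\ell$, $j=r-\ell+1$ and $r-\ell+2\le j\le r$ become exactly $1\le j\le i-1$, $j=i$ and $i+1\le j\le r$. The corresponding values $q^{2(J+1)j-1}+q^{2(J+1)(j-1)}$, $q^{2(J+1)(j-1)}$ and $0$ coincide with those of Lemma~\ref{Lemma2.2}. Hence $M^J_{\ell,v,(r-1)(J+1)+v}=N^J_{i,v,(r-1)(J+1)+v}$ for all $1\le v\le r$. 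The degenerate cases are consistent: $i=1$ (so $\ell=r$) gives an empty first range, and $i=r$ (so $\ell=1$) gives an empty last range.

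For the inductive step, assume \eqref{neq1} holds at level $d$ for every $1\le v\le r$. Then for each $1\le v\le r$,
\begin{align*}
M^J_{\ell,v,(r-1)(d+1)+v}&=q^{2(d+1)(v-1)}\sum_{m=1}^{r-v+1}M^J_{\ell,m,(r-1)d+m}+q^{2(d+1)v-1}\sum_{m=1}^{r-v}M^J_{\ell,m,(r-1)d+m}\\
&=q^{2(d+1)(v-1)}\sum_{m=1}^{r-v+1}N^J_{i,m,(r-1)d+m}+q^{2(d+1)v-1}\sum_{m=1}^{r-v}N^J_{i,m,(r-1)d+m}\\
&=N^J_{i,v,(r-1)(d+1)+v}.
\end{align*}
The first and last equalities are the recursions of Lemma~\ref{Lemma2.3} and Lemma~\ref{Lemma2.2}, and the middle one is the induction hypothesis. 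This completes the induction over all $d\ge J+1$.

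The only point needing care is that the recursion for $N$ in Lemma~\ref{Lemma2.2} must be exactly the relation produced by its inductive proof. That proof yields the coefficient of $\mathrm{HP}^{2(s+1)+1}_{\ell'}$ as $q^{2(s+1)(r-\ell')}\sum_{j=1}^{\ell'}N+q^{2(s+1)(r-\ell'+1)-1}\sum_{j=1}^{\ell'-1}N$. Reindexing with $v=r-\ell'+1$ gives the stated form, since the exponents become $2(s+1)(v-1)$ and $2(s+1)v-1$ and the upper limits become $r-v+1$ and $r-v$. With this index bookkeeping confirmed, the argument is purely formal.
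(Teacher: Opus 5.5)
Your proposal is correct and matches the paper's proof: induction on $d$, with the base case obtained by substituting $\ell=r-i+1$ into the initial conditions of Lemma~\ref{Lemma2.3}, and the inductive step from the fact that the $M$- and $N$-recursions are identical. Your extra re-indexing check on the proof of Lemma~\ref{Lemma2.2} is also correct, though the lemma as stated only needs the recursions as recorded in Lemmas~\ref{Lemma2.2} and~\ref{Lemma2.3}.
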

\begin{proof}
We prove \eqref{neq1} by induction on $d$. For the base case $d=J+1$, Lemma~\ref{Lemma2.3} gives
$$M_{\ell,v,(r-1)(J+1)+v}^{J}=\begin{cases}
q^{2(J+1)v-1}+q^{2(J+1)(v-1)} &\text{ if } 1\leq v \leq r-\ell; \\
q^{2(J+1)(v-1)} &\text{ if } v=r-\ell+1;  \\
0 &\text{ if } r-\ell+2 \leq v \leq r.
\end{cases}$$
Substituting $\ell=r-i+1$, we obtain
$$M_{r-i+1,v,(r-1)(J+1)+v}^{J}=\begin{cases}
q^{2(J+1)v-1}+q^{2(J+1)(v-1)} &\text{ if } 1\leq v \leq i-1; \\
q^{2(J+1)(v-1)} &\text{ if } v=i;  \\
0 &\text{ if } i+1 \leq v \leq r.
\end{cases}$$
Comparing this with Lemma~\ref{Lemma2.2}, we conclude that 
\begin{align*}
M_{\ell,v,(r-1)(J+1)+v}^{J}=N_{i,v,(r-1)(J+1)+v}^{J},
\end{align*}
for $\ell=r-i+1$. This establishes \eqref{neq1} for $d=J+1$.
\par Now assume that \eqref{neq1} holds for all $J+1 \leq d\leq s$. In particular, for $d=s$ and $\ell=r-i+1$, we have
\begin{align}\label{neq2}
M_{\ell,v,(r-1)s+v}^{J}=N_{i,v,(r-1)s+v}^{J}. 
\end{align}
By Lemma~\ref{Lemma2.3}, 
$$M_{\ell,v,(r-1)(s+1)+v}^{J}= q^{2(s+1)(v-1)} \sum_{m=1}^{r-v+1} M_{\ell,m,(r-1)s+m}^{J}+ q^{2(s+1)v-1} \sum_{m=1}^{r-v} M_{\ell,m,(r-1)s+m}^{J}.$$
Substituting \eqref{neq2} into this expression, we obtain
\begin{align*}
M_{\ell,v,(r-1)(s+1)+v}^{J}&= q^{2(s+1)(v-1)} \sum_{m=1}^{r-v+1} N_{i,m,(r-1)s+m}^{J}+ q^{2(s+1)v-1} \sum_{m=1}^{r-v} N_{i,m,(r-1)s+m}^{J}\\
&=N_{i,v,(r-1)(s+1)+v}^{J},
\end{align*}
where the last equality follows from Lemma~\ref{Lemma2.2}. Thus, \eqref{neq1} holds for $d=s+1$, completing the induction. Hence, \eqref{neq1} is valid for all $d\geq J+1$.	
\end{proof}
We are now in a position to prove Theorem~\ref{Thm2}. In the theorem below, we show that $\mathcal{C}_{(r-1)J+\ell}=\mathrm{HP}^{2J+1}_{i}$, which immediately yields Theorem~\ref{Thm2}.
\begin{theorem}\label{Theorem2.5}
For $r \geq 2$, $1 \leq i \leq r$, and $J\geq0$, we have 
$$\mathcal{C}_{(r-1)J+\ell}=\mathrm{HP}_i^{2J+1},$$ 
where $\ell=r-i+1$.
\end{theorem}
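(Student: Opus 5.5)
Our plan is to compare the two recursions of Lemmas~\ref{Lemma2.2} and \ref{Lemma2.3}, which by Lemma~\ref{Lemma2.4} have identical coefficients, and then let $d\to\infty$ in the $q$-adic topology on $\mathbb{F}[[q]]$. Fix $J\ge 0$, $1\le i\le r$ and $\ell=r-i+1$. For every $d\ge J+1$ these results give
\begin{align*}
\mathcal{C}_{(r-1)J+\ell}-\mathrm{HP}_i^{2J+1}=\sum_{j=1}^{r} N_{i,j,(r-1)d+j}^{J}\left(\mathcal{C}_{(r-1)d+j}-\mathrm{HP}_{r-j+1}^{2d+1}\right).
\end{align*}
The left side does not depend on $d$. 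So it suffices to show that each term on the right lies in $q^{D}\mathbb{F}[[q]]$, where $D\to\infty$ as $d\to\infty$. The difference is then divisible by arbitrarily high powers of $q$, and hence equals zero.

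We first handle the Hilbert-Poincar\'e side. The algebra $S_{2d+1}/L^{r-j+1}_{2d+1}$ is generated by $x_{2d+1},x_{2d+2},\ldots$. Every monomial of positive weight therefore has weight at least $2d+1$, so
$$\mathrm{HP}_{r-j+1}^{2d+1}=1+O(q^{2d+1}).$$
We also know that $N^J_{i,j,(r-1)d+j}\in\mathbb{F}[[q]]$ by Lemma~\ref{Lemma2.2}; these are in fact polynomials with nonnegative coefficients.

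The step we expect to be the main obstacle is the $\mathcal{C}$ side. We need that $\mathcal{C}_{(r-1)d+j}=1+O(q^{D(d)})$ with $D(d)\to\infty$, which is the standard fact from \cite{Coulson_2017} that the functions $\mathcal{C}_{(r-1)d+j}$ tend to $1$ $q$-adically. We would try to cite this directly from \cite{Coulson_2017}. If a proof is needed, we would argue by induction on $d$ from the recursive definition in the introduction, using the combination of the recursion in Lemma~\ref{Lemma2.3} with $J$ replaced by $d$. Alternatively, one can run the whole argument backwards, which is cleaner. Theorem~\ref{Thm2} gives a partition interpretation only once the theorem is proven, so that route is circular. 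Instead we compare two solutions of the same linear system. The identity $\mathcal{C}_{(r-1)J+\ell}=\sum_j M\,\mathcal{C}_{(r-1)d+j}$ from Lemma~\ref{Lemma2.3} holds for all $J$, and one checks from the defining recursion that $\mathcal{C}_{(r-1)g+i}$ has constant term $1$. One also checks that $\mathcal{C}_{(r-1)g+i}-1$ has $q$-adic order growing at least linearly in $g$. The mechanism is the division by $q^{2g(i-1)}$: it removes low-order terms, which are forced to cancel by induction.

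With both limits in hand, each factor in parentheses is $O(q^{\min(2d+1,D(d))})$ and the coefficients $N$ are power series. So the right-hand side tends to $0$ in the $q$-adic topology, and this yields $\mathcal{C}_{(r-1)J+\ell}=\mathrm{HP}_i^{2J+1}$. Combined with \eqref{eqj2}, this also recovers Theorem~\ref{Thm2}.
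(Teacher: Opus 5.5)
Your proof is correct and is essentially the paper's argument: both use Lemma~\ref{Lemma2.4} to identify the coefficients in the expansions of Lemmas~\ref{Lemma2.2} and~\ref{Lemma2.3} and then let $d\to\infty$ $q$-adically, using \cite[Theorem 4.1]{Coulson_2017} for the tails $\mathcal{C}_{(r-1)d+j}$ and the weight bound $1+O(q^{2d+1})$ for the Hilbert--Poincar\'e tails, and subtracting the two expansions first is a mild streamlining that removes the need for the paper's separate check that $\lim_d M$ and $\lim_d N$ exist (a check the paper gives only tersely for $m=1$). Rely on the citation for the $\mathcal{C}$-tails, as the paper does, because your fallback sketch would not work as written: the cancellation after dividing by $q^{2g(i-1)}$ is not forced by the recursion alone but depends on the specific infinite products $\mathcal{C}_1,\ldots,\mathcal{C}_r$, and that dependence is exactly the nontrivial content of the cited theorem.
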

\begin{proof}
From Lemma~\ref{Lemma2.3}, we have
\begin{align*}
M_{\ell,m,(r-1)(d+1)+m}^{J}&= q^{2(d+1)(m-1)} \sum_{t=1}^{r-m+1} M_{\ell,t,(r-1)d+t}^{J}+ q^{2(d+1)m-1} \sum_{t=1}^{r-m} M_{\ell,t,(r-1)d+t}^{J}\\
&=q^{2(d+1)(m-1)} \left(  \sum_{t=1}^{r-m+1} M_{\ell,t,(r-1)d+t}^{J}+ q^{2d+1} \sum_{t=1}^{r-m} M_{\ell,t,(r-1)d+t}^{J}\right).
\end{align*}
Clearly, the $q$-adic valuation of $M_{\ell,m,(r-1)(d+1)+m}^{J}$ is at least $2(d+1)(m-1)$. Hence, for each $1\leq m \leq r$, the limit 
$$\lim_{d \to +\infty}  M_{\ell,m,(r-1)(d+1)+m}^{J}$$ exists in the $q$-adic topology (see Section~\ref{Section2}). In particular, for $2\leq m \leq r$, we have
\begin{align*}
\lim_{d \to +\infty} & M_{\ell,m,(r-1)(d+1)+m}^{J}\\
&= \lim_{d \to +\infty} \left(  q^{2(d+1)(m-1)} \left(  \sum_{t=1}^{r-m+1} M_{\ell,t,(r-1)d+t}^{J}+ q^{2d+1} \sum_{t=1}^{r-m} M_{\ell,t,(r-1)d+t}^{J}    \right)  \right)\\
&=0.
\end{align*}
By Lemma~\ref{Lemma2.3}, we have 
$$\mathcal{C}_{(r-1)J+\ell}=\sum_{j=1}^{r} M_{\ell,j,(r-1)(d+1)+j}^{J} \mathcal{C}_{(r-1)(d+1)+j}.$$
Passing to the limit as $d \to +\infty$ on both sides of the above equation, we obtain
\begin{align}\label{g1}
\mathcal{C}_{(r-1)J+\ell}= \lim_{d \to +\infty} M_{\ell,1,(r-1)(d+1)+1}^{J} \mathcal{C}_{(r-1)(d+1)+1}.
\end{align}
By \cite[Theorem 4.1]{Coulson_2017}, we find that
\begin{align}\label{g2}
 \lim_{d \to +\infty} \mathcal{C}_{(r-1)(d+1)+1}=1.  
\end{align}
Combining \eqref{g1} and \eqref{g2}, we deduce 
$$\mathcal{C}_{(r-1)J+\ell}=\lim_{d \to +\infty} M_{\ell,1,(r-1)(d+1)+1}^{J}. $$
As mentioned earlier, the limit $\lim_{d \to +\infty} M_{\ell,1,(r-1)(d+1)+1}^{J}$ exists. We denote this limit by $M^J_{\ell,1,\infty}$. Thus,
\begin{align}\label{g3}
\mathcal{C}_{(r-1)J+\ell}=M_{\ell,1,\infty}^{J}.
\end{align}
\par Next, we consider the coefficients in the recursion formula of $\mathrm{HP}^{2J+1}_{i}$ under a similar situation. By Lemma~\ref{Lemma2.2}, we have
\begin{align*}
N_{i,m,(r-1)(d+1)+m}^{J}&= q^{2(d+1)(m-1)} \sum_{t=1}^{r-m+1} N_{i,t,(r-1)d+t}^{J}+ q^{2(d+1)m-1} \sum_{t=1}^{r-m} N_{i,t,(r-1)d+t}^{J}\\
&=q^{2(d+1)(m-1)} \left(  \sum_{t=1}^{r-m+1} N_{i,t,(r-1)d+t}^{J}+ q^{2d+1} \sum_{t=1}^{r-m} N_{i,t,(r-1)d+t}^{J}    \right).
\end{align*}
A completely analogous argument shows that the limit $$ \lim_{d \to +\infty}  N_{i,m,(r-1)(d+1)+m}^{J}$$ exists for all $1\leq m \leq r$ and vanishes for $2\leq m \leq r$. Moreover, for $d \geq J+1$,  Lemma~\ref{Lemma2.2} yields
$$\mathrm{HP}_i ^{2J+1}= \sum_{j=1}^{r} N_{i,j,(r-1)(d+1)+j}^{J}\mathrm{HP}_{r-j+1} ^{2d+3}.$$
Passing to the limit as $d \to +\infty$, we obtain
\begin{align}\label{g4}
\mathrm{HP}_i ^{2J+1}&= \lim_{d \to +\infty} N_{i,1,(r-1)(d+1)+1}^{J}\mathrm{HP}_{r} ^{2d+3}\nonumber\\
&= \lim_{d \to +\infty} N_{i,1,(r-1)(d+1)+1}^{J} \mathrm{HP}^{2d+3}.
\end{align}
We claim that $\lim_{d \to +\infty}\mathrm{HP}^{2d+3}=1$. We note that
$$\mathrm{HP} ^{2d+3}=\mathrm{HP} \left( \frac{S_{2d+3}}{L_{2d+3}}\right).$$ 
In the graded algebra $\frac{S_{2d+3}}{L_{2d+3}}$, the degree-zero homogeneous component is isomorphic to $\mathbb{F}$, and hence one-dimensional. For $1 \leq u \leq 2d+2$, the homogeneous component of degree $u$ is zero as there are no monomials of weight between $1$ and $2d+2$. Hence, $\mathrm{HP} ^{2d+3}=1+q^{2d+3}f(q)$ for some $f(q) \in \mathbb{F}[[q]]$, which implies
\begin{align}\label{g5}
\lim_{d \to +\infty}\mathrm{HP}^{2d+3}=1.
\end{align}
Substituting \eqref{g5} into \eqref{g4}, we obtain
$$\mathrm{HP}_i ^{2J+1}=\lim_{d \to +\infty} N_{i,1,(r-1)(d+1)+1}^{J}.$$
As seen earlier, the limit $\lim_{d \to +\infty} N_{i,1,(r-1)(d+1)+1}^{J}$ exists. We denote this limit by $N_{i,1,\infty}^{J}$. Thus,
\begin{align}\label{g6}
\mathrm{HP}_i ^{2J+1}=N_{i,1,\infty}^{J}.
\end{align}
Finally, by Lemma~\ref{Lemma2.4}, for $1\leq m \leq r$, we have
\begin{align*}
M_{\ell,m,(r-1)(d+1)+m}^{J}=N_{i,m,(r-1)(d+1)+m}^{J},
\end{align*}
where $\ell=r-i+1$. Passing to the limit as $d \to +\infty$ yields
\begin{align}\label{g7}
M_{\ell,1,\infty}^{J}=N_{i,1,\infty}^{J}.
\end{align}
Combining \eqref{g3}, \eqref{g6}, and \eqref{g7}, we conclude that
$$\mathcal{C}_{(r-1)J+\ell}=M_{\ell,1,\infty}^{J}=N_{i,1,\infty}^{J}=\mathrm{HP}_i ^{2J+1},$$
where $\ell=r-i+1$.	This completes the proof.
\end{proof}

\section*{Acknowledgement}
We are grateful to the referee for carefully reading the manuscript and for many helpful comments. The third author is supported by the Natural Science Foundation of Hebei Province (A2023205045) and the 111 Center (D26018).

\end{document}